\DeclareMathOperator{\RE}{Re} 
\numberwithin{equation}{section}
\newtheorem{theorem}{Theorem}[section]
\newtheorem{lemma}{Lemma}[section]
\newtheorem{corollary}{Corollary}[section]
\theoremstyle{remark}
\newtheorem{remark}{Remark}[section]
\newtheorem{example}{Example}[section]
\newcommand{\subjclassname@later}{\textup{2010} Mathematics Subject
Classification} \makeatother
\journal{Computers \& Mathematics with Applications}
\begin{document}

\begin{frontmatter}

\title{Inclusion Criteria for Subclasses of Functions and Gronwall's Inequality \tnoteref{t1}}

\tnotetext[t1]{The work presented here was  supported in part by a grant  from Universiti Sains Malaysia. This work was completed during the
last two authors' visit to Universiti Sains Malaysia.}

\author[rma]{Rosihan M. Ali\corref{cor1}}
\address[rma]{School of Mathematical Sciences,
Universiti Sains Malaysia, 11800 USM, Penang, Malaysia}
\ead{rosihan@cs.usm.my}

\author[rma]{Mahnaz M. Nargesi}
\ead{mmn.md08@student.usm.my}

\author[rma,vr]{V. Ravichandran\fnref{fn3}}
\address[vr]{Department of Mathematics, University of Delhi,
Delhi 110 007, India} \ead{vravi@maths.du.ac.in}

%

\author[swami]{A. Swaminathan}
\address[swami]{Department of Mathematics,
Indian Institute of Technology Roorkee, Roorkee 247 667, India}
\ead{swamifma@iitr.ernet.in}

\cortext[cor1]{Corresponding author}

\begin{abstract}
A normalized analytic function $f$ is  shown to be univalent in the open unit disk $\mathbb{D}$ if its second coefficient  is sufficiently small
and relates to its Schwarzian derivative through a certain inequality. New criteria for
 analytic functions  to be in certain subclasses of
 functions are established in terms of the Schwarzian derivatives
and the second coefficients. These include obtaining a sufficient condition for functions to be strongly $\alpha$-Bazilevi\v c of order $\beta$.

\end{abstract}

\begin{keyword}
Univalent functions, Bazilevi\v c functions, Gronwall's inequality, Schwarzian derivative, second coefficient.

\MSC[2010]   30C45
\end{keyword}

\end{frontmatter}

\section{Introduction} Let $\mathcal{A}$ be the set of all
normalized analytic functions $f$ of the form $f(z) = z + \sum_{k=2}^{\infty}a_k z^k $ defined in the open unit disk $\mathbb{D}:=\{z\in
\mathbb{C}: |z|<1\}$ and denote by $\mathcal{S}$ the subclass of $\mathcal{A}$ consisting of  univalent functions. A function $f\in \mathcal{A}$
is starlike if it maps $\mathbb{D}$ onto a starlike domain with  respect to the origin, and $f$ is convex if $f(\mathbb{D})$ is a convex domain.
Analytically, these are respectively equivalent to the conditions $\RE(zf'(z)/f(z))>0$ and $1+\RE(zf''(z)/f'(z))>0$ in $\mathbb{D}$. Denote by
$\mathcal{ST}$ and $\mathcal{CV}$ the classes of starlike and convex functions respectively. More generally, for $0\leq \alpha <1$, a function
$f\in \mathcal{A}$ is starlike of order $\alpha$ if $\RE(zf'(z)/f(z))>\alpha$, and  is convex of order $\alpha$ if
$1+\RE(zf''(z)/f'(z))>\alpha$. We denote these classes by $\mathcal{ST}(\alpha)$  and $\mathcal{CV}(\alpha)$ respectively. For $0 <\alpha \leq
1$,  let $\mathcal{SST}(\alpha)$ be the subclass of $\mathcal{A}$  consisting of functions $f$  satisfying the inequality
\begin{align*}
 \left|\arg
\dfrac{zf'(z)}{f(z)}\right| \leq \frac{\alpha\pi}{ 2}.
\end{align*}Functions in $\mathcal{SST}(\alpha)$ are called strongly starlike functions of order $\alpha$.\\

 The  Schwarzian derivative $S(f,z)$  of a locally univalent analytic function $f$ is
defined by
\[
S(f,z):=  \left(\frac{f''(z)}{f'(z)}\right)' - \frac{1}{2}
\left(\frac{f''(z)}{f'(z)}\right)^2.
\]
The Schwarzian derivative is invariant under M\"obius
transformations. Also, the Schwarzian derivative of an analytic
function $f$ is identically zero  if and only if it is a M\"obius
transformation.

Nehari   showed that the univalence of an analytic function in ${\mathbb{D}}$ can be guaranteed if its Schwarzian derivative is dominated by a
suitable positive function \cite[Theorem I, p.\ 700]{N2}. In  \cite{N1}, by considering two particular positive functions, a bound on the
Schwarzian derivative was obtained that would ensure univalence of an analytic function in $\mathcal{A}$. In fact, the following theorem was
proved.

\begin{theorem}\cite[Theorem II, p.\ 549]{N1}
If $f\in {\mathcal{A}}$  satisfies
\[
|S(f,z)|\leq \dfrac{\pi^2}{2} \quad (z\in {\mathbb{D}}),\]then
$f\in \mathcal{S}$.  The result is sharp for the function $f$ given
by $f(z)=(\exp(i\pi z)-1)/i\pi$.
\end{theorem}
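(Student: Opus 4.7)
The plan is to derive the result from the general criterion of Nehari cited just above as Theorem I of \cite{N2}, specialised to the constant weight $p(x)=\pi^{2}/4$. Recall that Nehari's theorem guarantees univalence in $\mathbb{D}$ provided $|S(f,z)|\le 2p(|z|)$, where $p$ is an even, continuous, positive function on $(-1,1)$ such that $(1-x^{2})^{2}p(x)$ is non-increasing on $[0,1)$ and such that no nontrivial real solution of the Sturm--Liouville equation
\[
u''(x)+p(x)u(x)=0
\]
has more than one zero in $(-1,1)$. So my task is simply to verify that $p(x)\equiv \pi^{2}/4$ meets both of these hypotheses.

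The monotonicity condition is immediate, since $(1-x^{2})^{2}\pi^{2}/4$ is plainly decreasing on $[0,1)$. For the zero-separation condition, the comparison ODE becomes $u''+(\pi^{2}/4)u=0$, whose general real solution is $u(x)=A\cos(\pi x/2)+B\sin(\pi x/2)=C\sin(\pi x/2+\varphi)$. Consecutive zeros of such a solution are spaced exactly two units apart, and hence any open interval of length $2$, in particular $(-1,1)$, contains at most one of them. With these verifications in hand, Nehari's criterion applies and yields $f\in\mathcal{S}$.

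For the sharpness claim with $f(z)=(e^{i\pi z}-1)/(i\pi)$, I would compute directly: $f'(z)=e^{i\pi z}$, so $f''(z)/f'(z)=i\pi$ is constant, giving $(f''/f')'=0$ and
\[
S(f,z)=-\tfrac12 (i\pi)^{2}=\tfrac{\pi^{2}}{2},
\]
so that the bound $|S(f,z)|\le \pi^{2}/2$ is attained with equality throughout $\mathbb{D}$. The function is still univalent in $\mathbb{D}$ (the map $z\mapsto i\pi z$ sends $\mathbb{D}$ into the vertical strip $|\IM w|<\pi$, on which the exponential is injective), but any enlargement of the constant $\pi^{2}/2$ would permit a slight perturbation of this extremal for which the image strip exceeds the period of the exponential, destroying univalence. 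This exhibits the sharpness.

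The argument has essentially no genuine obstacle once Nehari's general theorem is invoked: the one point requiring care is the zero-separation property, and writing the general solution in amplitude-phase form $C\sin(\pi x/2+\varphi)$ makes the required gap of $2$ between consecutive zeros transparent, which is exactly the length of the interval $(-1,1)$ on which we need at most one zero.
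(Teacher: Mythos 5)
This theorem is quoted in the paper as a known result of Nehari and is not proved there, so there is no in-paper argument to compare against; your task was effectively to reconstruct Nehari's own derivation, and you have done so correctly. Specialising Theorem I of \cite{N2} to the constant weight $p(x)\equiv\pi^{2}/4$ is exactly how the $\pi^{2}/2$ criterion sits inside the general framework (and matches how the paper's introduction describes the relationship between \cite{N2} and \cite{N1}): the monotonicity of $(1-x^{2})^{2}\pi^{2}/4$ is trivial, and your zero-separation check is right --- consecutive zeros of $C\sin(\pi x/2+\varphi)$ are distance exactly $2$ apart, so the open interval $(-1,1)$ contains at most one; equivalently, $\cos(\pi x/2)$ is a solution with no zero in $(-1,1)$, which is the form of the hypothesis in Nehari's statement. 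The computation $S(f,z)=-\tfrac12(i\pi)^{2}=\pi^{2}/2$ for the extremal function is also correct. The only place you are looser than you should be is the sharpness claim: showing that the extremal attains the bound and is still univalent does not by itself show the constant cannot be increased. Make the perturbation explicit: for $\varepsilon>0$ take $f_{\varepsilon}(z)=\bigl(e^{i\pi(1+\varepsilon)z}-1\bigr)/\bigl(i\pi(1+\varepsilon)\bigr)$, which has constant Schwarzian $\pi^{2}(1+\varepsilon)^{2}/2>\pi^{2}/2$ yet fails to be univalent because $f_{\varepsilon}(z_{1})=f_{\varepsilon}(z_{2})$ whenever $z_{1}-z_{2}=2/(1+\varepsilon)<2$, e.g.\ at $z_{1,2}=\pm 1/(1+\varepsilon)\in\mathbb{D}$. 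With that one sentence added, the argument is complete.
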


The problems of finding  similar bounds on the Schwarzian derivatives that would imply univalence, starlikeness or convexity of functions were
investigated by a number of authors including Gabriel \cite{gaber}, Friedland and Nehari \cite{F}, and Ozaki and Nunokawa \cite{ Oza}.
Corresponding results  related to  meromorphic functions were dealt with in \cite{gaber, Haimo, N1, P}. For instance, Kim and Sugawa
\cite{sugawa} found sufficient conditions in terms of the Schwarzian derivative for locally univalent meromorphic functions in the unit disk to
possess specific geometric properties such as starlikeness and convexity. The method of proof in \cite{sugawa} was based on  comparison theorems
in the theory of ordinary differential equations with real coefficients.

 Chiang \cite{chiang} investigated  strong-starlikeness of order
$\alpha$ and convexity of functions $f$ by requiring the Schwarzian
derivative $S(f ,z)$ and the second coefficient $a_2$ of $f$ to
satisfy certain inequalities. The following results were proved:

\begin{theorem} \cite[Theorem 1, pp.\ 108-109]{chiang} \label{ch:th1}
Let $f\in \mathcal{A}$, $0<\alpha\leq 1$ and
$|a_2|=\eta<\sin(\alpha\pi/2)$. Suppose
\begin{equation}\label{ch:eq1}
\sup_{z\in\mathbb{D}}| S(f,z)| =2\delta(\eta),
\end{equation}
where $\delta(\eta)$ satisfies the inequality
\[\sin^{-1}\left(\frac{1}{2}\delta e^{\delta/2}\right) +
\sin^{-1}\left(\eta+\frac{1}{2}(1+\eta)\delta e^{\delta/2}\right)
\leq \frac{\alpha\pi}{2}.  \] Then $f\in \mathcal{SST}(\alpha)$.
Further, $|\arg(f(z)/z)| \leq \alpha\pi/2$.
\end{theorem}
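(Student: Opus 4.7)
The plan is to decompose $\arg(zf'(z)/f(z)) = \arg f'(z) - \arg(f(z)/z)$ and bound each piece via the Schwarzian hypothesis, then appeal to the elementary observation that $|w-1|\le r<1$ implies $|\arg w|\le\sin^{-1}(r)$. The two arguments of $\sin^{-1}$ appearing in the hypothesis should match, respectively, a disk estimate for $f'(z)$ and one for $f(z)/z$, and summing them will produce the required strong-starlikeness bound.

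First I would set $p(z)=f''(z)/f'(z)$; then $|p(0)|=2\eta$ and the defining identity for the Schwarzian rearranges to the Riccati equation $p'(z)=S(f,z)+\tfrac12 p(z)^2$. Integrating along the radial segment $\zeta=te^{i\theta}$, $0\le t\le r$, produces the integral inequality
\[
|p(re^{i\theta})| \;\le\; 2\eta + 2\delta r + \tfrac12\int_0^r|p(te^{i\theta})|^2\,dt,
\]
which is a Riccati-type Gronwall inequality; comparison with the real-variable initial value problem $\psi'(r)=2\delta+\psi(r)^2/2$, $\psi(0)=2\eta$, yields an explicit a priori majorant $|p(re^{i\theta})|\le\psi(r)$ throughout $\mathbb{D}$ under the smallness hypothesis on $\delta$ and $\eta$.

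Next I would use $\log f'(z)=\int_0^z p(\zeta)\,d\zeta$ together with the elementary bound $|e^w-1|\le|w|e^{|w|}$ to convert the bound on $|p|$ into a disk estimate of the form $|f'(z)-1|\le\tfrac12\delta e^{\delta/2}$. Inserting this into the identity $f(z)/z-1=z^{-1}\int_0^z(f'(\zeta)-1)\,d\zeta$ and keeping track of the first-order $a_2$-contribution (which does not cancel, since $(f(z)/z)'(0)=a_2$) should yield the companion estimate $|f(z)/z-1|\le\eta+\tfrac12(1+\eta)\delta e^{\delta/2}$. Both quantities lie in $[0,1)$ under the assumption $\eta<\sin(\alpha\pi/2)$, so the $\sin^{-1}$ lemma converts them into argument bounds on $f'(z)$ and $f(z)/z$, whose sum is at most $\alpha\pi/2$ precisely by hypothesis. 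The secondary conclusion $|\arg(f(z)/z)|\le\alpha\pi/2$ is then immediate from the second estimate.

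The main obstacle is the Gronwall step: the quadratic term in the Riccati inequality rules out a one-shot exponential estimate, so one must either solve the scalar Riccati $\psi'=2\delta+\psi^2/2$ explicitly and then integrate, or iterate a Gronwall argument with careful bookkeeping of the $\delta$- and $\eta$-contributions through each integration. Producing the specific constants $\tfrac12\delta e^{\delta/2}$ and $\eta+\tfrac12(1+\eta)\delta e^{\delta/2}$, rather than the more natural $\sinh$/$\cosh$ expressions the linear comparison would first yield, will be the technical heart of the argument.
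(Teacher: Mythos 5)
Your overall strategy---bound two arguments separately and add them---is the right shape, but the way you assign the two $\sin^{-1}$ terms is wrong, and the wrongness shows up already in a trivial example. You claim a disk estimate $|f'(z)-1|\le\tfrac12\delta e^{\delta/2}$. Take $S(f,z)\equiv 0$, so $\delta=0$: then $f(z)=z/(1+cz)$ with $|c|=\eta>0$ and $f'(z)=(1+cz)^{-2}$ is certainly not identically $1$, so no bound on $|f'-1|$ can be free of $\eta$ (indeed $f'(z)-1=2a_2z+\cdots$). The term $\sin^{-1}(\tfrac12\delta e^{\delta/2})$ in the hypothesis does not control $\arg f'$; in Chiang's argument, which is the machinery this paper sets up in Section 2, it controls $\arg(u(z)/z)$, where $u,v$ are the fundamental solutions of the \emph{linear} equation $y''+A(z)y=0$, $A=\tfrac12 S(f,\cdot)$, normalized by $u(0)=v'(0)=0$, $u'(0)=v(0)=1$. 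One writes $f=u/(cu+v)$ with $c=-a_2$, so that
\begin{equation*}
\frac{zf'(z)}{f(z)}=\frac{z}{u(z)}\cdot\frac{1}{cu(z)+v(z)},\qquad
\frac{f(z)}{z}=\frac{u(z)/z}{cu(z)+v(z)},
\end{equation*}
and the two hypothesized terms are exactly $|\arg(u/z)|\le\sin^{-1}(\tfrac12\delta e^{\delta/2})$ (from $|u/z-1|<\tfrac12\delta e^{\delta/2}$) and $|\arg(cu+v)|\le\sin^{-1}(\eta+\tfrac12(1+\eta)\delta e^{\delta/2})$ (from $|cu+v-1|<\eta+\tfrac12(1+\eta)\delta e^{\delta/2}$); both conclusions of the theorem then follow from the same two bounds. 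Your decomposition $\arg(zf'/f)=\arg f'-\arg(f/z)$, if carried out with correct estimates, costs you an extra term: since $f'=(cu+v)^{-2}$ one only gets $|\arg f'|\le 2\sin^{-1}(\eta+\tfrac12(1+\eta)\delta e^{\delta/2})$, and the triangle inequality then yields three copies of the $\eta$-term instead of one, which does not recover the stated hypothesis.

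The deeper missing idea is the linearization itself. You propose to run Gronwall on the Riccati equation $p'=S(f,\cdot)+\tfrac12p^2$ for $p=f''/f'$, and you correctly identify the quadratic term as the obstacle---but that obstacle is fatal to producing these constants: a comparison with $\psi'=2\delta+\psi^2/2$ gives tangent-type majorants that blow up and bear no resemblance to $\tfrac12\delta e^{\delta/2}$. The whole point of the method (Lemma \ref{gronineq} together with the representations \eqref{f}, \eqref{fd}, \eqref{*}) is that passing to the second-order equation $y''+Ay=0$ replaces the Riccati equation by \emph{linear} Volterra integral equations for $u$ and $v$, to which the linear Gronwall inequality applies directly and yields \eqref{eqn-mod-u}--\eqref{eqn-mod-cu+v-1}. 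Without that step your outline cannot be completed as written.
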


\begin{theorem}\cite[Theorem 2, p.\ 109]{chiang}\label{thm:suff-cv-chiang}
Let $f\in \mathcal{A}$, and $|a_2|=\eta<1/3$. Suppose \eqref{ch:eq1}
holds where $\delta(\eta)$ satisfies the inequality
\[
6\eta+5(1+\eta)\delta e^{\delta/2} < 2.
\]
Then
\[ f\in\mathcal{CV}\left( \frac{2-6\eta-5(1+\eta)\delta e^{\delta/2} }{
2-2\eta-(1+\eta)\delta e^{\delta/2} } \right).
\]
In particular, if $a_2=0$ and $2\delta\leq 0.6712$, then
$f\in\mathcal{CV}$.
\end{theorem}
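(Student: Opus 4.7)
The plan is to mirror the approach that yielded Theorem~\ref{ch:th1}: recast the hypothesis on the Schwarzian as a linear second-order differential equation, apply a Gronwall-type comparison to bound $f''/f'$, and translate the resulting pointwise estimate into a lower bound for $\RE(1+zf''(z)/f'(z))$. The key reduction is to introduce $g(z) = [f'(z)]^{-1/2}$, which satisfies
\[ 2g''(z) + S(f,z)\, g(z) = 0, \qquad g(0) = 1,\quad g'(0) = -a_2, \]
together with $f''(z)/f'(z) = -2\,g'(z)/g(z)$. Thus it suffices to bound $|g'(z)|$ from above and $|g(z)|$ from below uniformly in $\mathbb{D}$.

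Fix a direction $\theta$ and put $h(t) = g(te^{i\theta})$ for $t \in [0,1)$. The equation gives $|h''(t)| \le \delta\,|h(t)|$; integrating twice produces
\[ h(t) = 1 - a_2 e^{i\theta} t - \tfrac{1}{2}\int_0^t (t-s)\, e^{2i\theta}S(f,se^{i\theta})\, h(s)\,ds. \]
A standard comparison argument then bounds $|h(t)| \le U(t)$ and $|h'(t)| \le U'(t)$, where $U(t) = \cosh(\sqrt{\delta}t) + (\eta/\sqrt{\delta})\sinh(\sqrt{\delta}t)$ solves $U'' = \delta U$ with $U(0) = 1$, $U'(0) = \eta$. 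Subtracting the remainder term from the reverse triangle estimate $|1 - a_2 e^{i\theta}t| \ge 1 - \eta t$ yields the matching \emph{lower} bound $|h(t)| \ge 2 - U(t)$, valid as long as $U(t)<2$, which the smallness hypothesis will eventually guarantee. To pass from the hyperbolic expressions to the $\delta e^{\delta/2}$ form appearing in the statement, I would use the elementary term-by-term inequalities $\cosh\sqrt{\delta} \le e^{\delta/2}$ and $\sinh(\sqrt{\delta})/\sqrt{\delta} \le e^{\delta/2}$, both obtained by comparing $\sum \delta^k/(2k)!$ and $\sum \delta^k/(2k{+}1)!$ coefficient-wise with $\sum (\delta/2)^k/k!$.

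Feeding these estimates into $|f''(z)/f'(z)| = 2|g'(z)|/|g(z)|$ should produce an inequality of the shape
\[ \left|\frac{f''(z)}{f'(z)}\right| \le \frac{4\eta + 4(1+\eta)\delta e^{\delta/2}}{2 - 2\eta - (1+\eta)\delta e^{\delta/2}} = 1 - \alpha, \]
with $\alpha$ as in the statement; the assumption $6\eta + 5(1+\eta)\delta e^{\delta/2} < 2$ is precisely what makes the denominator positive and $\alpha \ge 0$. Since $|z|<1$, this yields $\RE(1+zf''(z)/f'(z)) > \alpha$ and hence $f \in \mathcal{CV}(\alpha)$, with the numerical corollary for $a_2 = 0$ coming from solving $5\delta e^{\delta/2} = 2$. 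I expect the main obstacle to be in this final bookkeeping step: the Gronwall bounds sit naturally in $\cosh/\sinh$ form, and massaging them into the specific rational expression with constants $2$, $6$, $5$, $(1+\eta)$ demands choosing the right elementary estimate at each step and tracking the slack carefully so that the stated order of convexity, rather than a weaker one, actually comes out. A secondary care-point is that the denominator arises as a lower bound on $|g(z)|$ rather than an upper bound on $|g'(z)|$, so one must justify positivity of $2 - U(1)$ from the hypothesis before dividing.
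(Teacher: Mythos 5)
Your argument is correct and lands on exactly the stated order of convexity, but it reaches the two key estimates by a technically different route than the paper. First, observe that your auxiliary function $g=(f')^{-1/2}$ is precisely the combination $cu(z)+v(z)$ of \eqref{f}--\eqref{fd}: both solve $2y''+S(f,z)y=0$ with $y(0)=1$, $y'(0)=-a_2$. So your decomposition $f''/f'=-2g'/g$ and the closing algebra ($\RE(1+zf''/f')\ge 1-2|g'|/|g|$, with $6\eta+5(1+\eta)\delta e^{\delta/2}<2$ guaranteeing positivity of the denominator) coincide with how the paper derives \eqref{Q2} and deploys it in Theorem \ref{thm-suff-nonlinear-combination} (the case $\alpha=0$, $\beta=1$ of which recovers the cited result, per the subsequent Remark). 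Where you genuinely differ is in how the bounds $|g'|\le \eta+(1+\eta)\delta e^{\delta/2}$ and $|g|\ge 1-\eta-\tfrac12(1+\eta)\delta e^{\delta/2}$ (the paper's \eqref{ksp3} and \eqref{ksp2}) are produced: the paper integrates the Volterra representations \eqref{*} and invokes Gronwall's inequality (Lemma \ref{gronineq}), whereas you compare against the majorant ODE $U''=\delta U$, $U(0)=1$, $U'(0)=\eta$, obtain $|h|\le U$, $|h'|\le U'$, $|h|\ge 2-U$ in $\cosh/\sinh$ form, and then relax coefficientwise ($\cosh\sqrt{\delta}\le 1+\tfrac12\delta e^{\delta/2}$, $\sqrt{\delta}\sinh\sqrt{\delta}\le\delta e^{\delta/2}$, $\sinh(\sqrt{\delta})/\sqrt{\delta}\le 1+\tfrac12\delta e^{\delta/2}$) to recover exactly the $\delta e^{\delta/2}$ constants; I checked that each of these series comparisons holds and that the resulting rational expression simplifies to the stated order $(2-6\eta-5(1+\eta)\delta e^{\delta/2})/(2-2\eta-(1+\eta)\delta e^{\delta/2})$. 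Your route buys slightly sharper intermediate (hyperbolic) bounds at the price of having to justify the pointwise comparison $|h|\le U$, $|h'|\le U'$, which you only assert as standard, while the Gronwall route reaches the needed exponential bounds in one step; either way the bookkeeping closes, and your numerical endgame for $a_2=0$ ($5\delta e^{\delta/2}<2$ at $2\delta\le 0.6712$) is consistent.
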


Chiang's proofs in \cite{chiang}  rely  on Gronwall's inequality (see Lemma \ref{gronineq} below). In this paper, Gronwall's inequality is used
to obtain sufficient conditions for analytic functions to be univalent. Also, certain inequalities related to the Schwarzian derivative and the
second coefficient will be formulated that would ensure analytic functions to possess certain specific geometric properties. The sufficient
conditions of convexity obtained in \cite{chiang}   will be seen to be a special case of our result, and similar conditions for starlikeness
will also be obtained.

\section{Consequences of Gronwall's  Inequality }    Gronwall's
inequality and certain relationships between the Schwarzian derivative of $f$ and the solution of the linear second-order differential equation
$y''+A(z)y=0$ with $A(z):=S(f;z)/2$ will be revisited in this section.  We first state \textit{Gronwall's inequality}, which is needed in our
investigation.

\begin{lemma}\cite[p.\ 19]{gron}\label{gronineq}
Suppose  $A$ and $g$ are non-negative continuous real functions for
$t\geq 0$. Let $k> 0$ be a constant. Then the inequality
\[ g(t) \leq k+\int_0^t g(s)A(s)ds  \]
implies
\[ g(t) \leq k \exp\left( \int_0^t A(s)ds \right)  \quad ( t>0). \]
\end{lemma}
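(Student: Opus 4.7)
The plan is to reduce the integral inequality to a differential inequality by introducing an auxiliary majorant. Specifically, I would set $G(t) := k + \int_0^t g(s)A(s)\,ds$, so that by hypothesis $g(t) \leq G(t)$ for all $t \geq 0$. Since $g$ and $A$ are continuous, the fundamental theorem of calculus gives $G'(t) = g(t)A(t)$, and the non-negativity of $A$ combined with $g(t) \leq G(t)$ yields the pointwise differential inequality $G'(t) \leq A(t)G(t)$.

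Next I would solve this inequality by the integrating factor trick. Note that $G(t) \geq k > 0$ everywhere, which is exactly where the assumption $k > 0$ is used. Multiplying the inequality $G'(t) - A(t)G(t) \leq 0$ by the positive factor $\exp\bigl(-\int_0^t A(s)\,ds\bigr)$ shows that the function
$H(t) := G(t)\exp\bigl(-\int_0^t A(s)\,ds\bigr)$
satisfies $H'(t) \leq 0$, and hence is weakly decreasing. Comparing its value at $t$ with $H(0) = G(0) = k$ gives $G(t) \leq k\exp\bigl(\int_0^t A(s)\,ds\bigr)$. Combined with $g(t) \leq G(t)$, this yields the desired conclusion.

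The main obstacle I anticipate is not conceptual but technical: justifying that the integral inequality really can be manipulated as a smooth differential inequality. Since $g$ and $A$ are only assumed continuous, $G$ is automatically of class $C^1$, so there is no regularity issue, and the strict lower bound $G \geq k > 0$ allows the integrating factor computation (or, equivalently, taking logarithms) without any sign worries. A minor alternative I would keep in mind, should the direct $\log$-approach feel fragile, is to work from the start with the quantity $H(t)$ above and simply verify $H'(t) \leq 0$ by differentiation; this bypasses the logarithm entirely. Either variant delivers the same exponential bound after one rearrangement.
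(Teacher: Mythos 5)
Your proof is correct; it is the classical integrating-factor argument for Gronwall's inequality. The paper itself gives no proof of this lemma (it is quoted from Hille's book), so there is nothing to compare against; note only that the hypothesis $k>0$ is not actually needed for the $H'(t)\leq 0$ variant you describe, only for the logarithmic one.
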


For the linear second-order differential equation $y''+A(z)y=0$
where $A(z):=\frac{1}{2} S(f;z)$ is an analytic function, suppose
that $u$ and $v$ are two linearly independent solutions  with
initial conditions $u(0)=v'(0)=0$ and $u'(0)=v(0)=1$. Such solutions
always exist and thus the function   $f$ can be represented by
\begin{equation} \label{f}
f(z)= \frac{u(z)}{c u(z)+v(z)}, \quad (c:=-a_2).
\end{equation}
 It is evident   that
\begin{equation} \label{fd}
 f'(z)= \frac{1}{(c u(z)+v(z))^2}.
\end{equation}
 Estimates on bounds for various expressions related to $u$ and $v$
 were found in \cite{chiang}. Indeed, using the integral
 representation of the fundamental solutions
\begin{equation}\label{*}
  \begin{array}{ll}
  u(z) & = z + \int_0^z (\eta-z) A(\eta)u(\eta)d\eta ,\\[10pt]
   v(z) & = 1 + \int_0^z (\eta-z) A(\eta)v(\eta)d\eta,
  \end{array}
\end{equation}
 and applying Gronwall's inequality, Chiang obtained the following inequalities   \cite{chiang}
  which we list for easy reference:
\begin{align}\label{eqn-mod-u}
 &|u(z)|  < e^{\delta/2}, \\
\label{eqn-mod-u-z}
 &\left|\frac{u(z)}{z}-1\right|  <\frac{1}{2}\delta e^{\delta/2}, \\
\label{eqn-mod-cu+v}
&|cu(z)+v(z)|    < (1+\eta)e^{\delta/2}, \\
\label{eqn-mod-cu+v-1}
 &|cu(z)+v(z)-1|   < \eta + \frac{1}{2}(1+\eta)\delta e^{\delta/2}.
 \end{align}
 For instance,  by
 taking the path of integration  $\eta(t)=te^{i\theta}$,
 $t\in[0,r]$, $z=re^{i\theta}$, Gronwall's inequality
 shows that, whenever $|A(z)|<\delta$ and $0<r<1$,
\begin{align*}
|u(z)| & \leq 1+\int_0^r(r-t)|A(te^{i\theta})|\,|u(te^{i\theta})|dt \\
& \leq \exp(\int_o^r (r-t)|A(te^{i\theta})|dt) \leq \exp(\delta/2).
\end{align*}  This proves   inequality \eqref{eqn-mod-u}.
Note  that there was  a typographical error in \cite[ Inequality (8), p.\ 112]{chiang}, and that  inequality  \eqref{eqn-mod-u-z} is the right
form.

\section{Inclusion Criteria for   Subclasses of Analytic  Functions}
The first result leads to  sufficient conditions for univalence.

\begin{theorem}{\label{Thm-f'}}Let  $0<\alpha\leq1$,  $0\leq\beta<1$,
 $f\in \mathcal{A}$ and $|a_2|=\eta $,  where
$\alpha$,  $\beta$ and $\eta$ satisfy
\begin{equation}\label{Q14a} \sin^{-1}\big(\beta(1+\eta)^2\big)
+2 \sin^{-1}\eta<\frac{\alpha \pi}{2}.\end{equation}
 Suppose \eqref{ch:eq1} holds
where $\delta (\eta )$ satisfies the inequality
\begin{equation}\label{Q14}
\sin^{-1}\big(\beta(1+\eta)^2 e^{\delta}\big)+2
\sin^{-1}\left(\eta+\frac{1}{2}(1+\eta)\delta
e^{\delta/2}\right)\leq  \frac{\alpha\pi}{2}
\end{equation}
Then  $|\arg(f'(z)-\beta) | \leq \alpha \pi/2$.
\end{theorem}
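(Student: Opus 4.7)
The plan is to combine the representation \eqref{fd} with the estimates \eqref{eqn-mod-cu+v} and \eqref{eqn-mod-cu+v-1} obtained in Section~2, together with the elementary geometric fact that if $w\in\mathbb{C}$ satisfies $|w-1|\leq r<1$, then $|\arg w|\leq \sin^{-1}r$ (the extremal position of $w$ is where a line from the origin is tangent to the disk of radius $r$ centered at~$1$).

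From \eqref{fd} I would first rewrite
$$ f'(z)-\beta \;=\; \frac{1-\beta\bigl(cu(z)+v(z)\bigr)^2}{\bigl(cu(z)+v(z)\bigr)^2}, $$
and denote $A:=1-\beta(cu(z)+v(z))^2$ and $B:=cu(z)+v(z)$. Estimate \eqref{eqn-mod-cu+v-1} gives $|B-1|<\eta+\tfrac{1}{2}(1+\eta)\delta e^{\delta/2}$, hence
$$ |\arg B|\leq \sin^{-1}\!\Bigl(\eta+\tfrac{1}{2}(1+\eta)\delta e^{\delta/2}\Bigr), $$
while \eqref{eqn-mod-cu+v} gives $|A-1|=\beta|B|^2<\beta(1+\eta)^2 e^{\delta}$, hence
$$ |\arg A|\leq \sin^{-1}\!\bigl(\beta(1+\eta)^2 e^{\delta}\bigr). $$
Since $u(0)=0$ and $v(0)=1$, both $A$ and $B$ start at positive real values when $z=0$; the two bounds above together with \eqref{Q14} (which forces the first arcsine to be strictly less than $\pi/2$ and the second strictly less than $\pi/4$) keep the principal arguments well inside $(-\pi/2,\pi/2)$, so no branch wrapping occurs and $\arg(A/B^2)=\arg A-2\arg B$. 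The triangle inequality combined with \eqref{Q14} then yields
$$ |\arg(f'(z)-\beta)|\leq |\arg A|+2|\arg B|\leq \sin^{-1}\!\bigl(\beta(1+\eta)^2 e^{\delta}\bigr)+2\sin^{-1}\!\Bigl(\eta+\tfrac{1}{2}(1+\eta)\delta e^{\delta/2}\Bigr)\leq \frac{\alpha\pi}{2}, $$
which is the conclusion.

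The auxiliary hypothesis \eqref{Q14a} plays no direct role in the argument above: it is simply the $\delta\to 0^+$ limit of \eqref{Q14}, whose strict form guarantees that a positive $\delta$ satisfying \eqref{Q14} actually exists. I do not anticipate a substantive obstacle; the only points requiring care are checking that the two arcsines are well-defined (both inputs are strictly less than $1$ as an immediate consequence of \eqref{Q14}) and that the principal arguments of $A$ and $B$ remain in the correct ranges along any continuous path from $z=0$, which follows from the same bounds. Once the representation \eqref{f}--\eqref{fd} is invoked and the tangent-line observation is in place, the proof is a short chain of the inequalities from Section~2.
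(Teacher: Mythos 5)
Your proposal is correct and follows essentially the same route as the paper: the same decomposition $f'(z)-\beta = (1-\beta(cu+v)^2)/(cu+v)^2$, the same use of estimates \eqref{eqn-mod-cu+v} and \eqref{eqn-mod-cu+v-1} via the tangent-line fact $|w-1|\leq r \Rightarrow |\arg w|\leq \sin^{-1}r$, and the same triangle-inequality conclusion, with \eqref{Q14a} serving only to guarantee existence of a suitable $\delta$. Your added remarks on branch-tracking and well-definedness of the arcsines are sound refinements of details the paper leaves implicit.
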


\begin{proof}Using a limiting argument as $\delta\rightarrow 0$, the condition \eqref{Q14a} shows that there is a real
number $\delta(\eta)\geq0$  satisfying  inequality \eqref{Q14}. The
representation of $f'$ in terms of the linearly independent
solutions of the differential equation $y''+A(z)y=0$ with $ A(z):=
S(f;z)/2$ as given by
             equation  \eqref{fd} yields
\begin{align}
f'(z)-\beta =\frac{1-\beta(c\ u(z)+v(z))^2}{(c\
u(z)+v(z))^2}\label{th1e0}.
\end{align}
In view of the fact that for $w\in \mathbb{C}$, \[|w-1|\leq r\Leftrightarrow|\arg w|\leq \sin^{-1} r, \]
  inequality $(\ref{eqn-mod-cu+v})$ implies
\begin{align}\label{th1e2}
|\arg[1-\beta(c\ u(z)+v(z))^2]|\leq \sin^{-1}\big(\beta(1+\eta)^2
e^{\delta}\big).
\end{align} Similarly,  inequality \eqref{eqn-mod-cu+v-1} shows
\begin{equation} \label{th1e3}
|\arg[c\ u(z)+v(z)]| \leq  \sin^{-1}\left(\eta+\frac{1}{2}(1+\eta)\delta e^{\delta/2}\right).
\end{equation}
Hence,  it follows from \eqref{th1e0}, \eqref{th1e2} and \eqref{th1e3}  that
\begin{align*}
|\arg(f'(z)-\beta)|&\leq|\arg[1-\beta(c\ u(z)+v(z))^2]|+2|\arg[c\ u(z)+v(z)]|\\
&\leq\sin^{-1}(\beta(1+\eta)^2 e^{\delta})+2
\sin^{-1}\left(\eta+\frac{1}{2}(1+\eta)\delta e^{\delta/2}\right)\\
&\leq  \frac{\alpha\pi}{2},
\end{align*}
where the last inequality follows from \eqref{Q14}. This completes  the
proof.
\end{proof}

By taking $\beta=0$ in Theorem \ref{Thm-f'}, the following univalence criterion is obtained.

\begin{corollary}\label{Thm-suff-univ}
Let $f\in \mathcal{A}$, and  $|a_2|=\eta < \sin (\alpha\pi /4)$,
$0<\alpha\leq1$. Suppose $(\ref{ch:eq1})$ holds where $\delta (\eta
)$ satisfies the inequality
\begin{equation*}
 \eta+\frac{1}{2}(1+\eta)\delta e^{\delta/2} \leq \sin \left(\frac{\alpha\pi}{4}\right).
\end{equation*}
Then $|\arg f'(z)|\leq \alpha\pi /2$, and in particular
$f\in\mathcal{S}$.
\end{corollary}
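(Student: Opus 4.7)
The plan is to apply Theorem \ref{Thm-f'} with the choice $\beta = 0$ and then invoke the Noshiro--Warschawski criterion to upgrade the resulting bound on $\arg f'$ to univalence.

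First I would verify that setting $\beta = 0$ reduces the hypotheses of Theorem \ref{Thm-f'} precisely to those stated in the corollary. With $\beta = 0$, the term $\sin^{-1}\bigl(\beta(1+\eta)^2\bigr)$ in \eqref{Q14a} vanishes, so that condition collapses to $2\sin^{-1}\eta < \alpha\pi/2$, equivalently $\eta < \sin(\alpha\pi/4)$, which is exactly the bound assumed on $|a_2|$. Similarly, the term $\sin^{-1}\bigl(\beta(1+\eta)^2 e^{\delta}\bigr)$ in \eqref{Q14} vanishes, leaving
\[
2\sin^{-1}\!\left(\eta + \tfrac{1}{2}(1+\eta)\delta e^{\delta/2}\right) \leq \frac{\alpha\pi}{2},
\]
which, upon dividing by $2$ and applying $\sin$, is exactly the stated hypothesis on $\delta(\eta)$. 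Hence Theorem \ref{Thm-f'} applies and yields $|\arg(f'(z) - 0)| = |\arg f'(z)| \leq \alpha\pi/2$, which is the first conclusion.

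For the univalence assertion, I would argue as follows. Since $0 < \alpha \leq 1$, the bound $|\arg f'(z)| \leq \alpha\pi/2 \leq \pi/2$ forces $\RE f'(z) \geq 0$ throughout $\mathbb{D}$. Because $f' $ is analytic in $\mathbb{D}$ with $f'(0) = 1$, the function $f'$ is either identically $1$ (in which case $f(z) = z$ is trivially univalent) or else non-constant, so that by the open mapping theorem $f'(\mathbb{D})$ is an open subset of the closed right half-plane, hence contained in the open right half-plane. In either case $\RE f'(z) > 0$ in $\mathbb{D}$, and the Noshiro--Warschawski theorem, applied on the convex domain $\mathbb{D}$, gives $f \in \mathcal{S}$.

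There is essentially no obstacle: the entire proof is a direct specialization of Theorem \ref{Thm-f'} together with a standard univalence criterion. The only small care point is that the bound on $|\arg f'|$ gives only $\RE f' \geq 0$, so a brief open-mapping argument is needed before one can invoke Noshiro--Warschawski in its usual form.
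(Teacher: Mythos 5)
Your proposal is correct and follows exactly the route the paper intends: the paper derives the corollary simply by setting $\beta=0$ in Theorem \ref{Thm-f'}, and your verification that the hypotheses \eqref{Q14a} and \eqref{Q14} collapse to the stated conditions, together with the Noshiro--Warschawski argument for univalence, fills in precisely the details the paper leaves implicit. The extra remark about passing from $\RE f'\geq 0$ to $\RE f'>0$ via the open mapping theorem is a correct and welcome bit of care.
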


\begin{example}\label{example-g} Consider the univalent function $g$ given by
\[g(z)=\frac{z}{1+cz},\ \quad |c|\leq 1, \quad z\in {\mathbb{D}}.\]
Since the Schwarzian derivative of an analytic function is  zero if
and only if it is a M\"obius transformation, it is evident that $S(g,z)=0$. Therefore
the condition \eqref{ch:eq1} is satisfied with $\delta=0$.
 It is enough to take $\eta=|c|$ and to assume that
$\eta $, $\alpha $ and $\beta$ satisfy the inequality \eqref{Q14a}.
Now
\begin{align*}|\arg (g'(z)-\beta)|&=\left|\arg \frac{1}{(1+cz)^2}-\beta\right|
\leq|\arg(1-\beta (1+cz)^2)|+2|\arg (1+cz)|\\
&\leq\sin^{-1}(\beta(1+|c|)^2)+2 \sin^{-1}|c|.\end{align*} In view of the latter inequality, it is necessary to assume  inequality \eqref{Q14a}
for $g$ to satisfy $|\arg(g'(z)-\beta) | \leq \alpha \pi/2$.
\end{example}
Let  $0\leq\rho< 1$, $0\leq\lambda<1$, and $\alpha$ be a positive integer. A function $f\in\mathcal{A}$ is called an $\alpha$-Bazilevi\v c
function of order $\rho$ and type $\lambda$, written $f\in \mathcal{B}(\alpha,\rho,\lambda)$, if \[ \RE \left(
\frac{zf'(z)}{f(z)^{1-\alpha}g(z)^\alpha}\right)>\rho\quad (z\in\mathbb{D})\] for some function $g\in \mathcal{ST}(\lambda)$. The following
subclass of $\alpha$-Bazilevi\v c functions  is of interest.  A function $f\in \mathcal{A}$ is called strongly $\alpha$-Bazilevi\v c of order
$\beta$ if
\begin{align*}\label{eqn-Bazil}
\left|\arg\left(\left(\frac{z}{f(z)}\right)^{1-\alpha}f'(z)\right)\right|
<\frac{\beta\pi}{2},\quad   (\alpha>0;\  0<\beta\leq 1),\end{align*}
(see Gao \cite{Gao}). For the class of strongly $\alpha$-Bazilevi\v
c functions of order $\beta$,  the following sufficient condition is
obtained.

\begin{theorem}\label{Thm: suff-bazil} Let $\alpha>0$,  $0<\beta\leq
1$, $f\in \mathcal{A}$ and $|a_2|=\eta $,  where $\eta$, $\alpha$
and $\beta$ satisfy \[
\eta<\sin\left(\frac{\beta\pi}{2(1+\alpha)}\right).\]  Suppose
\eqref{ch:eq1} holds where $\delta (\eta )$ satisfies the inequality
\begin{equation}\label{6}
|1-\alpha|\sin^{-1}\left(\frac{1}{2}\delta e^{\delta/2}\right)
+(1+\alpha )\sin^{-1}\left(\eta+\frac{1}{2}(1+\eta)\delta e^{\delta/2}\right)
\leq\frac{\beta\pi}{2}.
\end{equation}
Then   $f$ is strongly $\alpha$-Bazilevi\v c of order $\beta$.
\end{theorem}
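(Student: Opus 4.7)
The plan is to mimic the argument used in Theorem \ref{Thm-f'} but applied to the Bazilevi\v c functional
\[
F(z):=\left(\frac{z}{f(z)}\right)^{1-\alpha}f'(z).
\]
First, I would rewrite $F(z)$ in terms of the fundamental solutions $u$ and $v$ of the linear differential equation $y''+A(z)y=0$ with $A(z)=S(f;z)/2$. Using $f(z)=u(z)/(cu(z)+v(z))$ from \eqref{f} and $f'(z)=1/(cu(z)+v(z))^{2}$ from \eqref{fd}, a short algebraic manipulation yields
\[
F(z)=\left(\frac{z}{u(z)}\right)^{1-\alpha}\frac{1}{(cu(z)+v(z))^{1+\alpha}}.
\]
This is the crucial rewrite, because it expresses $F$ as a product of exactly those two factors for which sharp arg-estimates are available from Section 2.

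Next, I would take arguments and apply the triangle inequality to obtain
\[
|\arg F(z)|\le|1-\alpha|\,\bigl|\arg(z/u(z))\bigr|+(1+\alpha)\,\bigl|\arg(cu(z)+v(z))\bigr|.
\]
The two arg terms are then bounded via the standard equivalence $|w-1|\le r\Longleftrightarrow|\arg w|\le\sin^{-1}r$ (as was done in the proof of Theorem \ref{Thm-f'}). Concretely, inequality \eqref{eqn-mod-u-z} gives $|\arg(z/u(z))|\le\sin^{-1}\!\bigl(\tfrac12\delta e^{\delta/2}\bigr)$, and inequality \eqref{eqn-mod-cu+v-1} gives $|\arg(cu(z)+v(z))|\le\sin^{-1}\!\bigl(\eta+\tfrac12(1+\eta)\delta e^{\delta/2}\bigr)$. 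Combining these with the hypothesis \eqref{6} immediately yields $|\arg F(z)|\le\beta\pi/2$, which is exactly the strong $\alpha$-Bazilevi\v c condition of order $\beta$.

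The only remaining subtlety is to ensure that a nonnegative $\delta(\eta)$ satisfying \eqref{6} actually exists under the hypothesis $\eta<\sin\!\bigl(\beta\pi/(2(1+\alpha))\bigr)$. Letting $\delta\to 0^{+}$ in the left-hand side of \eqref{6} produces the limiting value $(1+\alpha)\sin^{-1}\eta$, and the hypothesis on $\eta$ makes this strictly less than $\beta\pi/2$; by continuity, \eqref{6} then holds for all sufficiently small $\delta\ge 0$, just as in Theorem \ref{Thm-f'}.

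I do not expect any serious obstacle: once the correct factorization of $F(z)$ into powers of $z/u(z)$ and $cu(z)+v(z)$ is written down, the proof is a direct application of the arg-estimates from Section 2. The only care needed is the bookkeeping of the exponents $1-\alpha$ and $1+\alpha$, which is why the bound contains $|1-\alpha|$ rather than $(1-\alpha)$, and why the factor $(1+\alpha)$ (rather than $2$ as in Theorem \ref{Thm-f'}) appears in front of the second arcsine.
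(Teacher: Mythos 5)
Your proposal is correct and follows essentially the same route as the paper: the same factorization $F(z)=(z/u(z))^{1-\alpha}(cu(z)+v(z))^{-(1+\alpha)}$, the same triangle-inequality split of the argument, the same use of \eqref{eqn-mod-u-z} and \eqref{eqn-mod-cu+v-1} via the equivalence $|w-1|\le r\Leftrightarrow|\arg w|\le\sin^{-1}r$, and the same limiting argument for the existence of $\delta(\eta)$.
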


\begin{proof}The condition $\eta<\sin(\beta\pi/2(1+\alpha))$ ensures that there is a real
number $\delta(\eta)$ satisfying \eqref{6}.  Using \eqref{f} and \eqref{fd}  lead to
\begin{align*}
\left|\arg\left(\left(\frac{z}{f(z)}\right)^{1-\alpha}f'(z)\right)\right|
& =\left| \arg\left( \left(\frac{u(z)}{z}\right)^{\alpha-1}  \left(cu(z)+v(z)\right)^{-(\alpha+1)}  \right)\right|\\
&\leq |1-\alpha|\left|\arg\left(\frac{u(z)}{z}\right)\right|
+|\alpha+1|\,\left|\arg(cu(z)+v(z))\right|.
\end{align*}
It now follows from  \eqref{eqn-mod-u-z},  \eqref{th1e3} and
\eqref{6}  that
\begin{align*}
\left|\arg\left(\left(\frac{z}{f(z)}\right)^{1-\alpha}f'(z)\right)\right|& \leq |1-\alpha|\sin^{-1}\left(\frac{1}{2}\delta e^{\delta/2}\right)
+(1+\alpha )\sin^{-1}\left(\eta+\frac{1}{2}(1+\eta)\delta e^{\delta/2}\right)\\
&\leq\frac{\beta\pi}{2}. \qedhere
\end{align*}
\end{proof}

For $\alpha\geq0$, consider the class $R(\alpha)$ defined by
\[\mathcal{R}(\alpha)=\{f\in \mathcal{A}: \RE  \left(f'(z)+\alpha zf''(z)\right)>0, \ \alpha\geq0\}.\]
For this class,   the following  sufficient condition is obtained.

\begin{theorem}\label{Thm-libera(f)}Let $\alpha \geq 0$, $f\in
\mathcal{A}$ and $|a_2|=\eta $,  where $\eta$ and $\alpha$ satisfy
\begin{equation}\label{4}
2\sin^{-1}\eta+ \sin^{-1}\left(\frac{2\eta\alpha}{1-\eta}\right)<\frac{\pi}{2}.\end{equation}
 Suppose \eqref{ch:eq1} holds
where $\delta (\eta )$ satisfies the inequality
\begin{equation}\label{5}
2 \sin^{-1}\left(\eta+\frac{1}{2}(1+\eta)\delta
e^{\delta/2}\right)+\sin^{-1}\left(\frac{4\alpha\big(\eta+(1+\eta)\delta
e^{\delta/2}\big)}{2-2\eta-(1+\eta)\delta
e^{\delta/2}}\right)\leq\frac{\pi}{2}.
\end{equation}
Then  $ f\in\mathcal{R}(\alpha) $.
\end{theorem}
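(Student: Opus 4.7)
The plan is to mimic the arguments of Theorems \ref{Thm-f'} and \ref{Thm: suff-bazil} and show that $|\arg(f'(z)+\alpha z f''(z))|\leq \pi/2$, which gives $\RE(f'(z)+\alpha z f''(z))>0$ and hence $f\in\mathcal{R}(\alpha)$. First, I would invoke a limiting argument as $\delta\to 0$ so that the strict inequality \eqref{4} guarantees the existence of some $\delta(\eta)\geq 0$ satisfying \eqref{5}.

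Next, using the representation \eqref{fd}, differentiate to obtain
\[
f''(z)=\frac{-2(cu'(z)+v'(z))}{(cu(z)+v(z))^{3}},
\]
so that
\[
f'(z)+\alpha z f''(z)=\frac{1}{(cu(z)+v(z))^{2}}\left[1-\frac{2\alpha z(cu'(z)+v'(z))}{cu(z)+v(z)}\right].
\]
Writing $W$ for the bracketed term, the triangle inequality for arguments gives
\[
|\arg(f'(z)+\alpha z f''(z))|\leq 2|\arg(cu(z)+v(z))|+|\arg(1-(1-W))|,
\]
and the first piece is already handled by \eqref{th1e3}, which controls $|\arg(cu+v)|$ by $\sin^{-1}(\eta+\tfrac{1}{2}(1+\eta)\delta e^{\delta/2})$.

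The main obstacle is bounding the second piece, since none of the inequalities \eqref{eqn-mod-u}--\eqref{eqn-mod-cu+v-1} control the derivatives $u'$ or $v'$. To get around this, I would differentiate the integral representations \eqref{*} under the integral sign; the boundary terms vanish because the kernel $(\zeta-z)$ kills the endpoint $\zeta=z$, leaving
\[
cu'(z)+v'(z)=c-\int_{0}^{z}A(\zeta)\bigl(cu(\zeta)+v(\zeta)\bigr)\,d\zeta.
\]
Integrating along the ray $\zeta(t)=te^{i\theta}$, $t\in[0,r]$, with $|c|=\eta$, $|A|<\delta$, and $|cu+v|<(1+\eta)e^{\delta/2}$ from \eqref{eqn-mod-cu+v}, yields $|cu'(z)+v'(z)|\leq \eta+r(1+\eta)\delta e^{\delta/2}$, and multiplying by $|z|=r<1$ gives
\[
|z(cu'(z)+v'(z))|\leq \eta+(1+\eta)\delta e^{\delta/2}.
\]
Combining this with the lower bound $|cu(z)+v(z)|\geq 1-\eta-\tfrac{1}{2}(1+\eta)\delta e^{\delta/2}$, which is immediate from \eqref{eqn-mod-cu+v-1}, produces
\[
\left|\frac{2\alpha z(cu'(z)+v'(z))}{cu(z)+v(z)}\right|\leq \frac{4\alpha\bigl(\eta+(1+\eta)\delta e^{\delta/2}\bigr)}{2-2\eta-(1+\eta)\delta e^{\delta/2}}.
\]

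Finally, using the standard estimate $|\arg(1-w)|\leq \sin^{-1}|w|$ valid when $|w|\leq 1$ (which is ensured by \eqref{5}), and adding the two argument bounds, I get
\[
|\arg(f'(z)+\alpha z f''(z))|\leq 2\sin^{-1}\!\Bigl(\eta+\tfrac{1}{2}(1+\eta)\delta e^{\delta/2}\Bigr)+\sin^{-1}\!\left(\frac{4\alpha(\eta+(1+\eta)\delta e^{\delta/2})}{2-2\eta-(1+\eta)\delta e^{\delta/2}}\right)\leq \frac{\pi}{2},
\]
where the last inequality is exactly \eqref{5}. The only nonroutine step is the derivation of the bound on $z(cu'+v')$, which requires reopening the integral representation \eqref{*}; once that estimate is in place, the rest is a direct argument-additivity calculation analogous to the previous theorems.
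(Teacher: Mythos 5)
Your proposal is correct and follows essentially the same route as the paper: the same factorization $f'+\alpha zf''=f'(1+\alpha zf''/f')$ with $zf''/f'=-2z(cu'+v')/(cu+v)$, the same differentiation of the integral representations to get $cu'(z)+v'(z)=c-\int_0^z A(\zeta)(cu(\zeta)+v(\zeta))\,d\zeta$ and the bound $\eta+(1+\eta)\delta e^{\delta/2}$, the same lower bound on $|cu+v|$ from \eqref{eqn-mod-cu+v-1}, and the same argument-additivity conclusion via \eqref{5}. No gaps.
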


\begin{proof} Again it is easily seen from a limiting argument that the condition \eqref{4} guarantees the existence of
  a  real number $\delta(\eta)\geq0$   satisfying the
inequality \eqref{5}. It is sufficient to show  that
\begin{align*}
\left|\arg\left(f'(z)\left(1+\alpha\frac{zf''(z)}{f'(z)}\right)\right)\right|<\frac{\pi}{2}.
\end{align*}
 The equation  \eqref{fd} yields
\begin{align}\label{ksp1a}
\frac{zf''(z)}{f'(z)}&=-2z \frac{c u'(z)+v'(z)}{cu(z)+v(z)}.
\end{align}
A simple calculation from  \eqref{*} shows that
\[ cu'(z)+v'(z)= c-\int_0^zA(\eta)[cu(\eta)+v(\eta)]d\eta, \]
and  an application of  \eqref{eqn-mod-cu+v} leads to
\begin{align}\label{ksp3}
|cu'(z)+v'(z)|\leq \eta+(1+\eta)\delta e^{\delta/2}.
\end{align}
Use of   \eqref{eqn-mod-cu+v-1} results in
\begin{align}\label{ksp2}
|cu(z)+v(z)| \geq 1-|cu(z)+v(z)-1| \geq 1- \eta-\frac{1}{2}(1+\eta)\delta e^{\delta/2}.
\end{align}
The lower bound in \eqref{ksp2} is non-negative from the assumption made in \eqref{5}.  From \eqref{ksp1a}, \eqref{ksp3} and \eqref{ksp2} , it
is evident that
\begin{align}\notag
\left|\left(1+\alpha\frac{zf''(z)}{f'(z)}\right)-1\right|& =\left|2z\alpha\frac{c u'(z)+v'(z)}{c
u(z)+v(z)}\right| \\
&   \leq \frac{2\alpha\big(\eta+(1+\eta)\delta
e^{\delta/2}\big)}{1-\eta-\frac{1}{2}(1+\eta)\delta
e^{\delta/2}}.\notag
\end{align}
Hence,
\begin{equation}\label{Q2}
\left|\arg \left(1+\alpha \frac{zf''(z)}{f'(z)}\right)\right|
\leq\sin^{-1}\left(\frac{4\alpha\big(\eta+(1+\eta)\delta
e^{\delta/2}\big)}{2-2\eta-(1+\eta)\delta e^{\delta/2}}\right).
\end{equation}
From \eqref{th1e3} it follows that
\begin{align}\label{ksp1}
|\arg f'(z)|=2|\arg(cu(z)+v(z))|\leq 2
\sin^{-1}\left(\eta+\frac{1}{2}(1+\eta)\delta e^{\delta/2}\right).
\end{align} Using  \eqref{fd} and \eqref{th1e3},
the inequality \eqref{ksp1} together with \eqref{Q2}  and \eqref{5}
imply that
\begin{align*}
\left|\arg \left( f'(z)\left(1+\alpha\frac{zf''(z)}{f'(z)}\right)\right) \right|
&\leq |\arg f'(z)|+\left|\arg\left(1+\alpha \frac{zf''(z)}{f'(z)}\right)\right|\\
&\leq 2 \sin^{-1}\left(\eta+\frac{1}{2}(1+\eta)\delta e^{\delta/2}\right)
    +\sin^{-1}\left(\frac{4\alpha\big(\eta+(1+\eta)\delta e^{\delta/2}\big)}{2-2\eta-(1+\eta)\delta e^{\delta/2}}\right)\\
    &\leq\frac{\pi}{2}.\qedhere
\end{align*}
\end{proof}
\begin{theorem}\label{thm-suff-nonlinear-combination}
Let $f\in \mathcal{A}$, $|a_2|=\eta \leq1/3$,  and $\beta$, $\alpha$ be real numbers satisfying
\begin{equation}\label{9}|\alpha|\sin^{-1}\eta+
|\beta|\sin^{-1}\left(\frac{2\eta}{1-\eta}\right)<\frac{\pi}{2}.\end{equation}
 Suppose \eqref{ch:eq1} holds
where $\delta (\eta )$ satisfies the inequality
\begin{equation}\label{8}\begin{split}
 |\alpha|\sin^{-1}\left(\frac{1}{2}\delta e^{\delta/2}\right)
+|\alpha|\sin^{-1}\left(\eta+\frac{1}{2}(1+\eta)\delta e^{\delta/2}\right) \\
{}+|\beta|\sin^{-1}\left(\frac{4\big(\eta+(1+\eta)\delta
e^{\delta/2}\big)}{2-2\eta-(1+\eta)\delta
e^{\delta/2}}\right)\leq\frac{\pi}{2}.
\end{split}
\end{equation}
Then
\begin{align}\label{eqn-nonlinear-st-cv}
{\rm \RE\,}
\left(\left(\dfrac{zf'(z)}{f(z)}\right)^{\alpha}\left(1+\dfrac{zf''(z)}{f'(z)}\right)^{\beta}\right)>0.
\end{align}
\end{theorem}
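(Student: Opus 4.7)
The overall strategy is to show that $\bigl|\arg\bigl((zf'/f)^{\alpha}(1+zf''/f')^{\beta}\bigr)\bigr|<\pi/2$ throughout $\mathbb{D}$, which immediately yields the positivity of the real part in \eqref{eqn-nonlinear-st-cv}. Since $\alpha,\beta$ are real, I would open with the elementary inequality
\begin{equation*}
\bigl|\arg(A^{\alpha}B^{\beta})\bigr|\le |\alpha|\,|\arg A|+|\beta|\,|\arg B|,
\end{equation*}
which reduces the task to bounding $|\arg(zf'/f)|$ and $|\arg(1+zf''/f')|$ independently. A preliminary limiting argument as $\delta\to 0$, modeled on the ones in Theorems~\ref{Thm-f'} and \ref{Thm-libera(f)}, would first show that the strict inequality \eqref{9} guarantees the existence of a nonnegative $\delta(\eta)$ satisfying \eqref{8}, while the assumption $\eta\le 1/3$ ensures that the denominator $2-2\eta-(1+\eta)\delta e^{\delta/2}$ appearing later in the proof remains strictly positive.

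For the first quantity, I would use \eqref{f} and \eqref{fd} together with the constant-Wronskian identity $u'v-uv'=1$ (valid since $y''+A(z)y=0$ lacks a first-order term) to rewrite
\begin{equation*}
\frac{zf'(z)}{f(z)}=\frac{z}{u(z)\bigl(cu(z)+v(z)\bigr)},
\end{equation*}
so that $|\arg(zf'/f)|\le |\arg(u/z)|+|\arg(cu+v)|$. Applying the implication $|w-1|\le r\Rightarrow |\arg w|\le \sin^{-1}r$ to the bounds \eqref{eqn-mod-u-z} and \eqref{eqn-mod-cu+v-1} then gives
\begin{equation*}
\left|\arg\frac{zf'(z)}{f(z)}\right|\le \sin^{-1}\!\Bigl(\tfrac{1}{2}\delta e^{\delta/2}\Bigr)+\sin^{-1}\!\Bigl(\eta+\tfrac{1}{2}(1+\eta)\delta e^{\delta/2}\Bigr),
\end{equation*}
which accounts exactly for the two $|\alpha|$-weighted terms in~\eqref{8}.

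For the second quantity, I would simply reuse the estimate already established inside the proof of Theorem~\ref{Thm-libera(f)} at the parameter value $1$: the bound \eqref{Q2} with that specialization yields
\begin{equation*}
\left|\arg\!\left(1+\frac{zf''(z)}{f'(z)}\right)\right|\le \sin^{-1}\!\left(\frac{4\bigl(\eta+(1+\eta)\delta e^{\delta/2}\bigr)}{2-2\eta-(1+\eta)\delta e^{\delta/2}}\right),
\end{equation*}
matching the $|\beta|$-weighted term in \eqref{8}. Weighting the two bounds by $|\alpha|$ and $|\beta|$ and invoking \eqref{8} then produces the desired upper bound $\pi/2$. I expect the main obstacle to be not conceptual but verificational: one must keep the argument of every $\sin^{-1}$ within $[0,1]$ and ensure the denominator in the second bound is strictly positive everywhere in $\mathbb{D}$. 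Both are automatic once $\delta(\eta)$ satisfies \eqref{8}---each summand on the left-hand side of \eqref{8} is individually at most $\pi/2$, and the required positivity of the denominator is precisely the role played by the assumption $\eta\le 1/3$. After these checks, the proof is a tidy assembly of ingredients already developed in the preceding theorems.
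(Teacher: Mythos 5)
Your proposal is correct and follows essentially the same route as the paper: the same factorization $zf'/f = (z/u)\cdot(cu+v)^{-1}$ bounded via \eqref{eqn-mod-u-z} and \eqref{th1e3}, the same reuse of \eqref{Q2} with the parameter set to $1$, and the same weighted triangle inequality for arguments combined with \eqref{8}. The only cosmetic difference is your gloss on the hypothesis $\eta\le 1/3$, whose primary role is to keep $2\eta/(1-\eta)\le 1$ so that the $\sin^{-1}$ in \eqref{9} is defined, rather than the denominator positivity you cite (which is anyway implicit in \eqref{8} being well posed).
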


\begin{proof} The inequality \eqref{9} assures the existence of
  $\delta$ satisfying \eqref{8}.  From \eqref{f} and
\eqref{fd} it follows that
\begin{align}\label{eq-zf'/f}
\dfrac{z f'(z)}{f(z)}=\dfrac{z}{u(z)}\dfrac{1}{cu(z)+v(z)}, \quad z\in {\mathbb{D}}.
\end{align}
By \eqref{eqn-mod-u-z} and  \eqref{th1e3},
\begin{equation}\label{3}\left|\arg\left(\frac{zf'(z)}{f(z)}\right)\right|\leq\sin^{-1}\left(\frac{1}{2}\delta
e^{\delta/2}\right)+\sin^{-1}\left(\eta+\frac{1}{2}(1+\eta)\delta
e^{\delta/2}\right).\end{equation} Using \eqref{Q2} with $\alpha=1$,
\eqref{3} and \eqref{8}  lead to
\begin{align*}
\left|\arg\left(\left(\frac{zf'(z)}{f(z)}\right)^{\alpha}
\left(1+\frac{zf''(z)}{f'(z)}\right)^{\beta}\right)\right|
&\leq|\alpha|\left|\arg\left(\frac{zf'(z)}{f(z)}\right)\right|
+|\beta|\left|\arg\left(1+\frac{zf''(z)}{f'(z)}\right)\right|\\
&\leq|\alpha|\sin^{-1}\left(\frac{1}{2}\delta
e^{\delta/2}\right)+|\alpha|\sin^{-1}\left(\eta+\frac{1}{2}(1+\eta)\delta
e^{\delta/2}\right)\\
&  \quad{}+|\beta|\sin^{-1}\frac{4\big(\eta+(1+\eta)\delta
e^{\delta/2}\big)}{2-2\eta-(1+\eta)\delta e^{\delta/2}}\\
&\leq\frac{\pi}{2}.\end{align*} This shows that
\eqref{eqn-nonlinear-st-cv} holds.
\end{proof}

\begin{remark}
Theorem \ref{thm-suff-nonlinear-combination} yields
the following interesting special cases.
\begin{enumerate}[(i)]
\item If $\alpha=0$,  $\beta=1$, a sufficient condition for
convexity is obtained. This case reduces to a result in
\cite[Theorem 2, p.\ 109]{chiang}.

\item For $\alpha=1$,  $\beta=0$, a sufficient condition for
starlikeness  is obtained.

\item For  $\alpha=-1$ and $\beta=1$, then the class of functions
satisfying \eqref{eqn-nonlinear-st-cv} reduces to the class of
functions
\[ \mathcal{G}:= \left\{f\in \mathcal{A}\Bigg| \ {\rm \RE\,}
\left(\dfrac{1+\frac{zf''(z)}{f'(z)}}{\frac{zf'(z)}{f(z)}}\right)>0\right\}.\]
This class $\mathcal{G}$  was  considered   by Silverman \cite{Sil}
and Tuneski \cite{Tuneski}.
\end{enumerate}

\end{remark}

\begin{theorem}
Let $\beta\geq0$, $f\in \mathcal{A}$ and $|a_2|=\eta $, where $\eta$
satisfies
\begin{align}\label{lasteqn} \sin^{-1}\left(\eta\right)
+\sin^{-1}\left(\frac{2\beta\eta}{1-\eta }\right)<\frac{\pi}{2}.
\end{align}
Suppose \eqref{ch:eq1} holds
where $\delta (\eta )$ satisfies the inequality
\begin{align*}
 \sin^{-1}\left(\frac{1}{2}\delta e^{\delta/2}\right)
+\sin^{-1}\left(\eta+\frac{1}{2}(1+\eta)\delta e^{\delta/2}\right)
+\sin^{-1}\left(\frac{4\beta\big(\eta+(1+\eta)\delta
e^{\delta/2}\big)}{2-2\eta-(1+\eta)\delta
e^{\delta/2}}\right)\leq\frac{\pi}{2}.
\end{align*}
Then
\begin{align}\label{kspcon}
 \RE\left( \dfrac{zf'(z)}{f(z)}+  \beta \dfrac{z^2f''(z)}{f(z)}\right)>0.
\end{align}
\end{theorem}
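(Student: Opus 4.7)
The plan is to follow the same template as the proof of Theorem \ref{thm-suff-nonlinear-combination}, exploiting an algebraic factorization that reduces the target expression to a product of two quantities whose arguments have already been estimated elsewhere in the paper. Specifically, since $f'(z) \neq 0$, I would write
\[
\frac{zf'(z)}{f(z)} + \beta \frac{z^{2}f''(z)}{f(z)} \;=\; \frac{zf'(z)}{f(z)}\left(1 + \beta\,\frac{zf''(z)}{f'(z)}\right),
\]
so that to conclude \eqref{kspcon} it suffices to bound $|\arg(\cdot)|$ of the left-hand side by $\pi/2$, and this in turn is dominated by the sum $|\arg(zf'(z)/f(z))| + |\arg(1 + \beta zf''(z)/f'(z))|$.

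Next I would apply the two estimates already at our disposal: inequality \eqref{3} gives
\[
\left|\arg\!\left(\frac{zf'(z)}{f(z)}\right)\right| \leq \sin^{-1}\!\left(\tfrac{1}{2}\delta e^{\delta/2}\right) + \sin^{-1}\!\left(\eta + \tfrac{1}{2}(1+\eta)\delta e^{\delta/2}\right),
\]
while inequality \eqref{Q2}, with $\alpha$ replaced by $\beta \geq 0$, yields
\[
\left|\arg\!\left(1 + \beta\,\frac{zf''(z)}{f'(z)}\right)\right| \leq \sin^{-1}\!\left(\frac{4\beta\bigl(\eta+(1+\eta)\delta e^{\delta/2}\bigr)}{2-2\eta-(1+\eta)\delta e^{\delta/2}}\right).
\]
Summing these two bounds reproduces exactly the three-term expression on the left-hand side of the hypothesized inequality, which by assumption is at most $\pi/2$. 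This delivers $\RE$ of the combination $\geq 0$, and the standard interpretation used throughout the paper gives the strict inequality \eqref{kspcon}.

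For completeness, the preliminary step is the existence of a nonnegative $\delta(\eta)$ satisfying the main displayed inequality: I would note, as in the earlier theorems, that letting $\delta \to 0^{+}$ the three-term sum degenerates into $\sin^{-1}(\eta) + \sin^{-1}(2\beta\eta/(1-\eta))$, which is strictly less than $\pi/2$ by \eqref{lasteqn}, so continuity guarantees a suitable $\delta > 0$.

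There is essentially no obstacle beyond spotting the factorization in the first step; once that is in place, the calculation reduces to reusing \eqref{3} and \eqref{Q2} verbatim, with $\beta$ in the role formerly played by $\alpha$. The only minor care required is noting that the denominator $2-2\eta-(1+\eta)\delta e^{\delta/2}$ arising in \eqref{Q2} is positive (this is ensured by the estimate \eqref{ksp2} from the proof of Theorem \ref{Thm-libera(f)}), so all the $\sin^{-1}$ arguments lie in a range where the estimates are valid.
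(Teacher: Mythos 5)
Your proof is correct and is precisely the argument the paper intends: the paper explicitly omits the proof, stating it is "similar to the proof of Theorem \ref{thm-suff-nonlinear-combination}," and your factorization $\frac{zf'(z)}{f(z)}\bigl(1+\beta\frac{zf''(z)}{f'(z)}\bigr)$ combined with \eqref{3} and \eqref{Q2} (with $\beta$ in place of $\alpha$) is exactly that argument, including the limiting check that \eqref{lasteqn} guarantees a suitable $\delta$.
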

The proof is similar to the proof of Theorem
$\ref{thm-suff-nonlinear-combination}$, and is therefore omitted.
The inequality \eqref{lasteqn} is equivalent to the condition \[
\eta\left( 1+\sqrt{(1-\eta)^2-4\beta^2\eta^2}+2\beta\sqrt{1-\eta^2}
\right) <1.
\]
For $\beta=1$, the above equation simplifies to
\begin{align*}
\eta^8-4 \eta^7+12 \eta^6-12 \eta^5+6 \eta^4+20 \eta^3-4 \eta^2-4
\eta+1 = 0;
\end{align*} the value of the root $\eta$ is  approximately  0.321336.
Functions    satisfying   inequality \eqref{kspcon}
were investigated  by Ramesha \emph{et  al.}\ \cite{KSP}.\\

Consider the class $P(\gamma)$, $0\leq\gamma\leq 1$, given by
\[
P(\gamma):= \left\{f\in {\mathcal{A}}:
\left|\arg\left((1-\gamma)\dfrac{f(z)}{z}+\gamma f'(z)\right)\right|<\dfrac{\pi}{2}, \quad z \in {\mathbb{D}}\right\}.
\]
The same approach applying Gronwall's inequality leads   to the
following result about the class $P(\gamma)$.

\begin{theorem}\label{thm-p-gamma} Let $0\leq \gamma <1$, $f\in {\mathcal{A}}$ and $|a_2|=\eta$, where $\eta$ and $\gamma$ satisfy
\begin{equation}\label{7}
\sin^{-1}\left(\dfrac{\gamma}{1-\gamma}\dfrac{1}{\eta-1}\right)+\sin^{-1} \eta<\dfrac{\pi}{2}.
\end{equation}
Suppose $(\ref{ch:eq1})$ holds
where $\delta(\eta)$ satisfies the inequality
\begin{equation}\begin{split}\label{eq-p-gamma}
&\sin^{-1}\left(\dfrac{1}{2}\delta e^{\delta/2}\right)
+\sin^{-1}\left(\eta+\dfrac{1}{2}(1+\eta)\delta e^{\delta/2}\right)\\
&+ \sin^{-1}\left(\dfrac{2\gamma}{1-\gamma}\dfrac{1}{2-2\eta-(1+\eta)\delta
 e^{\delta/2}}\dfrac{1}{1-2e^{\delta/2}}\right)\leq\dfrac{\pi}{2}.
\end{split}
\end{equation}
Then $f\in P(\gamma)$.
\end{theorem}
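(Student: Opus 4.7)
The plan mirrors the proof of Theorem \ref{thm-suff-nonlinear-combination}. First, I would argue via a limiting argument as $\delta\to 0^{+}$, together with continuity of $\sin^{-1}$, that the strict inequality \eqref{7} guarantees the existence of some real $\delta(\eta)\geq 0$ satisfying \eqref{eq-p-gamma}. To control the argument of $(1-\gamma)f(z)/z + \gamma f'(z)$, I would use the multiplicative decomposition
\[
(1-\gamma)\frac{f(z)}{z} + \gamma f'(z) = \frac{f(z)}{z}\left[(1-\gamma)+\gamma \frac{zf'(z)}{f(z)}\right],
\]
reducing the task, by the triangle inequality for arguments, to estimating $|\arg(f(z)/z)|$ and $|\arg((1-\gamma)+\gamma zf'(z)/f(z))|$ separately.

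The first piece is handled as in the proof of Theorem \ref{thm-suff-nonlinear-combination}: from \eqref{f} one has $f(z)/z = (u(z)/z)/(cu(z)+v(z))$, and combining \eqref{eqn-mod-u-z} with \eqref{th1e3} produces exactly the first two arcsine summands in \eqref{eq-p-gamma}. For the second piece, I would rewrite $(1-\gamma)+\gamma zf'(z)/f(z) = 1+\gamma\bigl(zf'(z)/f(z)-1\bigr)$, invoke \eqref{f} and \eqref{fd} to obtain
\[
\frac{zf'(z)}{f(z)} - 1 = \frac{z - u(z)(cu(z)+v(z))}{u(z)(cu(z)+v(z))},
\]
and estimate this quotient using the Gronwall-type bounds of Section~2: the numerator is controlled through \eqref{eqn-mod-u-z} and \eqref{eqn-mod-cu+v-1}, while the denominator is bounded from below by combining the lower bound on $|cu(z)+v(z)|$ extracted in \eqref{ksp2} with a similar lower bound on $|u(z)/z|$ deduced from \eqref{eqn-mod-u-z}. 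The standard implication $|w-1|\leq r\Rightarrow|\arg w|\leq\sin^{-1} r$ then produces the third arcsine summand of \eqref{eq-p-gamma}, with the prefactor $2\gamma/(1-\gamma)$ emerging after a rearrangement that isolates the factor $(1-\gamma)$.

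Summing the three arcsine estimates and invoking \eqref{eq-p-gamma} yields $|\arg((1-\gamma)f(z)/z+\gamma f'(z))|\leq \pi/2$, which is precisely the membership $f\in P(\gamma)$.

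I expect the main obstacle to lie in making the third arcsine argument match the stated form in \eqref{eq-p-gamma} exactly: since $|zf'(z)/f(z)-1|$ admits several equivalent upper bounds through the estimates \eqref{eqn-mod-u}--\eqref{eqn-mod-cu+v-1}, one must choose the particular combination---dictated by the denominator structure $2-2\eta-(1+\eta)\delta e^{\delta/2}$ already featured in the proof of Theorem \ref{Thm-libera(f)}---that reproduces both the $2\gamma/(1-\gamma)$ coefficient and the auxiliary factor involving $e^{\delta/2}$.
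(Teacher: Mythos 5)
Your overall architecture is the same as the paper's: factor as $\frac{f(z)}{z}\bigl[(1-\gamma)+\gamma\frac{zf'(z)}{f(z)}\bigr]$, bound $|\arg(f(z)/z)|$ by the first two arcsine summands via \eqref{eqn-mod-u-z} and \eqref{th1e3} (this part of your plan is exactly the paper's inequality \eqref{2}), and add a bound on the argument of the bracketed factor. The gap is in how you treat that bracketed factor, and it is precisely the point you flag as the "main obstacle" without resolving it. Your two sub-ideas are mutually inconsistent: if you write the factor as $1+\gamma\bigl(\frac{zf'(z)}{f(z)}-1\bigr)$, the implication $|w-1|\le r\Rightarrow|\arg w|\le\sin^{-1}r$ forces you to bound $\gamma\,\bigl|\frac{zf'(z)}{f(z)}-1\bigr|$, whose estimate via your displayed quotient $\frac{z-u(cu+v)}{u(cu+v)}$ has prefactor $\gamma$ and a numerator of the form $\frac12\delta e^{\delta/2}+e^{\delta/2}\bigl(\eta+\frac12(1+\eta)\delta e^{\delta/2}\bigr)$ --- this simply is not the third summand of \eqref{eq-p-gamma}, and it is neither obviously smaller than it nor reducible to it by "rearrangement." If instead you first isolate $(1-\gamma)$ to get the prefactor $\gamma/(1-\gamma)$, the quantity playing the role of $w-1$ becomes $\frac{\gamma}{1-\gamma}\frac{zf'(z)}{f(z)}$ \emph{whole}, not $\frac{\gamma}{1-\gamma}\bigl(\frac{zf'(z)}{f(z)}-1\bigr)$, so the subtraction of $1$ never enters. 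Since the hypothesis \eqref{eq-p-gamma} is stated with one specific third summand, a bound of a different shape does not close the proof.

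What the paper actually does: it writes the factor as $1+\beta\frac{zf'(z)}{f(z)}$ with $\beta=\gamma/(1-\gamma)$ and estimates
\[
\Bigl|1+\beta\tfrac{zf'(z)}{f(z)}-1\Bigr|=\beta\,\Bigl|\tfrac{z}{u(z)}\Bigr|\,\frac{1}{|cu(z)+v(z)|},
\]
using \eqref{eq-zf'/f}, the lower bound \eqref{ksp2} on $|cu(z)+v(z)|$ (which supplies the $2-2\eta-(1+\eta)\delta e^{\delta/2}$ denominator you correctly anticipated), and a separate auxiliary estimate $|z/u(z)|\le 1/(1-2e^{\delta/2})$ derived in \eqref{eq-z/u} from a Gronwall bound on $|u(z)-1|$. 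That auxiliary bound on $|z/u(z)|$ is the ingredient missing from your plan; your proposal to lower-bound $|u(z)/z|$ from \eqref{eqn-mod-u-z} is actually the more natural move, but it would again yield a denominator different from the $1-2e^{\delta/2}$ appearing in \eqref{eq-p-gamma}. To prove the theorem \emph{as stated}, you must reproduce the paper's two factors exactly: drop the "$-1$" from $zf'/f$, keep the $\gamma/(1-\gamma)$ prefactor from factoring out $(1-\gamma)$, and use \eqref{eq-z/u} together with \eqref{ksp2}.
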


\begin{proof} Condition \eqref{7} assures the existence of  a real
number $\delta(\eta)\geq0$ satisfying the inequality \eqref{eq-p-gamma}. A simple calculation from \eqref{*}  and Lemma \ref{gronineq} shows
that
\begin{align*}
|u(z)-1|& \leq |z-1|+\left|\int_0^z (\zeta-z)A(\zeta)u(\zeta)d\zeta \right| \\
& \leq 2 \displaystyle e^{\delta/2}.
\end{align*}
The above inequality gives
\begin{align}\label{eq-z/u}
\left|\dfrac{z}{u(z)}\right| \leq \dfrac{1}{|u(z)|}\leq \dfrac{1}{1-|u(z)-1|}\leq \dfrac{1}{1-2e^{\delta/2}}.
\end{align}
Therefore, for some $0<\beta\leq\gamma/(1-\gamma)$, \eqref{eq-zf'/f}, \eqref{eq-z/u} and \eqref{ksp2} lead to
\begin{align*}
\left|1+\beta \dfrac{zf'(z)}{f(z)}-1\right|
& = \beta \left|\dfrac{z}{u(z)}\right|\, \dfrac{1}{\left|cu(z)+v(z)\right|}\\
& \leq \dfrac{\beta}{1-2e^{\delta/2}}\dfrac{1}{1-\eta-\frac{1}{2}(1+\eta)\delta e^{\delta/2}}\\
&= \dfrac{2\beta}{1-2e^{\delta/2}}\dfrac{1}{2-2\eta-(1+\eta)\delta e^{\delta/2}}.
\end{align*}
Hence
\begin{equation}\label{1}
\left| \arg\left(1+\beta \dfrac{zf'(z)}{f(z)}\right)\right|
\leq\sin^{-1}\left(\dfrac{2\beta}{1-2e^{\delta/2}}\dfrac{1}{2-2\eta-(1+\eta)\delta
e^{\delta/2}}\right).
\end{equation}
Also,   \eqref{eqn-mod-u-z} and \eqref{th1e3} yield
\begin{align}\label{2}
\left|\arg \dfrac{f(z)}{z}\right|&=\left|\arg \dfrac{u(z)}{z(cu(z)+v(z))}\right|\notag \\
& \leq
\left|\arg \dfrac{u(z)}{z}\right|+ \left|\arg (cu(z)+v(z))\right|\notag\\
&\leq \sin^{-1}\left(\dfrac{1}{2}\delta e^{\delta/2}\right)+
\sin^{-1} \left(\eta+\dfrac{1}{2}(1+\eta)\delta e^{\delta/2}\right).
\end{align}
Replacing $\beta$ by $\gamma/(1-\gamma)$ in  inequality \eqref{1},  and using \eqref{2} and \eqref{eq-p-gamma} yield
\begin{align*}
\left|\arg\left((1-\gamma)\dfrac{f(z)}{z}+\gamma f'(z)\right)\right|
&\leq \left|\arg\dfrac{f(z)}{z}\right|
 + \left|\arg\left(1+\dfrac{\gamma}{1-\gamma} \dfrac{zf'(z)}{f(z)}\right)\right| \\
& \leq \sin^{-1}\left(\dfrac{1}{2}\delta e^{\delta/2}\right)
+\sin^{-1}\left(\eta+\dfrac{1}{2}(1+\eta)\delta e^{\delta/2}\right)\\
&\quad{}
+\sin^{-1}\left(\dfrac{2\gamma}{1-\gamma}\dfrac{1}{1-2e^{\delta/2}}
\dfrac{1}{2-2\eta-(1+\eta)\delta e^{\delta/2}}\right)\\
&\leq\dfrac{\pi}{2},
\end{align*}
and hence $f\in P(\gamma)$.
\end{proof}

 \end{document}